\definecolor{purple}{rgb}{0.9,0,0.8}
\definecolor{gray}{rgb}{0.7,0.7,0.7}
\newcommand{\abbr}[1]{{\sc\lowercase{#1}}}
\newtheorem{thm}{Theorem}[section]
\newtheorem{lem}[thm]{Lemma}
\newtheorem{ppn}[thm]{Proposition}
\theoremstyle{definition}
\newtheorem{defn}[thm]{Definition}
\newtheorem{ex}[thm]{Example}
\newtheorem{remark}[thm]{Remark}
\newtheorem{assumption}[thm]{Asumption}
\newcommand{\beq}{\begin{equation}}
\newcommand{\eeq}{\end{equation}}
\newcommand{\ep}{\epsilon}
\newcommand{\B}{\mathbb{B}}
\newcommand{\D}{\mathbb{D}}
\newcommand{\bD}{\mathbb{D}}
\newcommand{\E}{\mathbb{E}}
\newcommand{\bE}{\mathbb{E}}
\newcommand{\bG}{\mathbb{G}}
\newcommand{\G}{\mathbb{G}}
\newcommand{\bH}{\mathbb{H}}
\newcommand{\K}{\mathbb{K}}
\newcommand{\N}{\mathbb{N}}
	\renewcommand{\P}{\mathbb{P}}	
\newcommand{\bP}{\mathbb{P}}
\newcommand{\Z}{\mathbb{Z}}
\newcommand{\bZ}{\mathbb{Z}}
\newcommand{\cF}{\mathcal{F}}
\newcommand{\wt}{\widetilde}
\newcommand{\wh}{\widehat}
\DeclareMathOperator{\argmax}{arg\,max}
\renewcommand{\emptyset}{\varnothing}
\begin{document}
\title{On random walk on growing graphs}
\author{Ruojun Huang}
\address{Department of Statistics, Stanford University. Sequoia Hall, 390 Serra Mall, Stanford, CA 94305, USA.}
\thanks{This research was supported in part by NSF grant DMS-1106627.}
\keywords{random walk, time-inhomogeneity, evolving sets, recurrence, transience, heat kernel bounds, merging.}
\subjclass[2010]{Primary 60J10; Secondary 60J35, 60K37}
\date{\today}
\maketitle
\begin{abstract}
Random walk on changing graphs is considered. For sequences of finite graphs increasing monotonically towards a limiting infinite graph, we establish transition probability upper bounds. It yields sufficient transience criteria for simple random walk on slowly growing graphs, upon knowing the volume and Cheeger constant of each graph.
For much more specialized cases, we establish matching lower bounds, and deduce sufficient (weak) recurrence criteria. We also address recurrence directly in relation to a universality conjecture of \cite{DHS}.
We answer a related question of \cite[Problem 1.8]{SZ} about ``inhomogeneous merging" in the negative.
\end{abstract}

\section{Introduction}\label{sec:intro}
This work pursues an interest in behaviors of random walk on time-dependent graphs. The time evolution of the graph is assumed to be independent of the walk, to distinguish from interacting-type models, resulting in its random walk forming a time-inhomogeneous Markov chain. There is a sizeable literature on random walk in dynamic random environment, which is partly motivated by application to random walk on a field of moving particles in equilibrium (see e.g. \cite{HHSST, MO, ACDS} and references therein).
Our framework differs, in that we assume no other abstract condition than that the graph is a ``monotone graph", and hence far from equilibrium. A quintessential example is random walk on growing-in-time $d$-dimensional domains, analysed in \cite{DHS}. Motivated by determining recurrence versus transience of simple random walk on independently growing Internal Diffusion Limited Aggregation (\abbr{IDLA} \cite{LBG}) clusters on $\bZ^d$, it is proved that for any increasing sets $\D_t\uparrow\Z^d$, $d\ge 3$, having the rough shape of a ball $\B_{f(t)}\subseteq\D_t\subseteq\B_{Cf(t)}$, for some $f(t)\uparrow\infty$ and finite constant $C$, whenever $\int_1^\infty f(t)^{-d}dt<\infty$, the walk $\{X_t\}$ which takes steps in $\{\D_t\}$ almost surely visits every vertex of $\Z^d$ finitely often; and under additional technical conditions, the converse is also true. In particular, there is a recurrent phase when the domain grows sufficiently slowly, despite that the limiting graph is transient.

The method used in \cite{DHS} is specific to ball-like sets in $d$-dimensional lattice, but the question of obtaining sufficient criteria to determine transience or recurrence extends to general increasing graphs.
Recent works conjecture and partly show that some universality applies for monotonically and independently time-varying graphs. 
Our model as stated, is in fact a degenerate case (for $\Pi_t\in\{0,1\}^{E}$) of a more general framework introduced by \cite{ABGK}, of discrete time random walk $\{X_t\}$ on graphs $\G=(V,E)$ endowed with time-dependent (symmetric non-negative) edge conductances $\{\Pi_t\}$ - more details see section \ref{framework}. Restricting to the independent setting, they propose a universality conjecture, which states that provided the conductances $\Pi_t\in[0,\infty)^{E}$ are edge-wise non-decreasing in $t$ and both the starting and ending conductances $\Pi_0,\Pi_\infty$ correspond to recurrent (resp. transient) graphs, then the dynamic model is also recurrent (resp. transient). Without monotonicity, however, this is known to be false. Extending the use of potential theory, the conjecture is verified in \cite{ABGK} in case $\G$ is any tree. Later, \cite{DHMP} further verifies the transient case when uniform in $t$ isoperimetric inequality of order $d>2$ holds for $(\G,\Pi_t)$ - for example $\G=\Z^d, d>2$ and $\Pi_t$ bounded uniformly up and below. The method used, of establishing heat kernel estimates for the time-inhomogeneous random walk via {\emph{evolving sets}}, is further developed in the present work to apply to more degenerate cases (cf. the relevant \cite[Problem 1.17]{DHMP}). 
\footnote{See \cite{DHZ} for a further development in the uniformly elliptic conductance case, where Gaussian-type two-sided estimates in the sprirt of \cite{Del, HS} are established via analytic means. The focus of the present paper is considerably different from \cite{DHZ}, in particular the graph structure is changing rather than just the conductances and we prefer to take a probabilistic route.}
Evolving random sets was introduced in \cite{MP1,MP} to give improved bounds on mixing times of  finite Markov chains, as well as a probabilistic derivation of on-diagonal heat kernel upper bounds on infinite graphs from graph isoperimetric properties. Equipped with a time-dependent version of evolving sets, we use the idea of (imperfect) mixing to give heat kernel upper bounds for finite graphs growing slowly towards a limiting (infinite) graph. See Theorem \ref{thm-main} and Proposition \ref{ppn:tra}. Of course, for inhomogeneous Markov chains the notion of mixing (or any quantitative statements about their ergodic properties) is delicate, and without imposing strong structural assumptions it is generally anomalous.

Indeed, one other purpose of this work is to answer a related question posed in \cite[Problem 1.8]{SZ} about {\emph{merging}} of inhomogeneous finite Markov chains. Studied in a sequence of works \cite{SZ1,SZ} is the following problem: given time-dependent Markov transition kernels $K^{(t)}$ (say, on a graph), each of which is reversible with respect to some {\it probability} measure $\mu^{(t)}$, under what conditions will the Markov chain $\{X_t\}$ that uses $\{K^{(t)}\}$ as its transitions forget its initial condition? The latter property is called {\emph{merging}}, and the quantitative bounds on the time to achieve such, relative to the size or complexity of the system, is called merging time. It can be viewed as analogue of mixing time for homogeneous finite Markov chains (\cite{LPW}). Techniques such as Nash and log-Sobolev inequalities are developed for this purpose, under an overarching assumption called {\emph{$c$-stability}} \cite[section 1.4]{SZ}, which however is hard to verify. Hence the following question is left open: for birth-death processes on intergers $V_N=\{0,1,...,N\}$, with each kernel $K^{(t)}(x,y)\in[1/4,3/4]$ whenever $|x-y|\le 1$, and its reversible measure $(N+1)\mu^{(t)}(x)\in[1/4,4]$ for all $x\in V_N$, whether the total variation $\delta$-merging time
\begin{align}\label{tv-merge}
T_{TV}(\delta):=\inf\Big\{t\ge0:\max_{x,y\in V_N}||K_{0,t}(x,\cdot)-K_{0,t}(y,\cdot)||_{TV}<\delta\Big\}
\end{align}
where $K_{0,t}:=K^{(0)}K^{(1)}...K^{(t-1)}$,
or relative-sup $\delta$-merging time 
\begin{align}\label{sup-merge}
T_\infty(\delta):=\inf\Big\{t\ge0:\max_{x,y,z\in V_N}\left|\frac{K_{0,t}(x,z)}{K_{0,t}(y,z)}-1\right|<\delta\Big\}
\end{align}
must be at most $CN^2(1+\log(\delta^{-1}\vee1))$ for some $C$ universal. We provide a negative answer.
\begin{ppn}\label{ppn:no-merging}
For any $\ep>0$, there exists birth-death process on $V_N=\{0,1,..,N\}$, each step $t$ using kernel $K^{(t)}$ satisfying $K^{(t)}(x,y)\in[1/3-\ep,1/3+\ep]$ whenever $|x-y|\le 1$, except for $K^{(t)}(0,0), K^{(t)}(N,N)\in[2/3-\ep,2/3+\ep]$; and each $K_t$ having reversible probability measure $\mu^{(t)}$ satisfying $(N+1)\mu^{(t)}(x)\in [1-\ep,1+\ep]$ for all $x\in V_N$, such that both the total variation and relative-sup $\frac{1}{2}$-merging times are at least $\alpha e^{\alpha N}$ for some $\alpha=\alpha(\ep)>0$.
\end{ppn}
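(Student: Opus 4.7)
\textbf{Plan for the proof of Proposition \ref{ppn:no-merging}.}
The strategy is to exhibit, for each fixed $\epsilon>0$ and each $N$, an explicit sequence of birth-death kernels $\{K^{(t)}\}_{t \ge 0}$ on $V_N$ meeting the stated constraints, together with a bounded test function $\varphi$ that the time-inhomogeneous chain approximately conserves to exponential precision. Once such a pair is in hand, the merging-time lower bound follows from a standard telescoping estimate.

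\emph{Construction.} Each $K^{(t)}$ will be a small time-dependent perturbation of the lazy symmetric nearest-neighbor kernel, with entries deviating from $1/3$ by amounts bounded by $\epsilon$ and reversible with respect to a measure $\mu^{(t)}$ satisfying $(N+1)\mu^{(t)}(x) \in [1-\epsilon, 1+\epsilon]$. Time-inhomogeneity is essential: any fixed kernel satisfying the constraints has Cheeger constant bounded below by a constant times $1/N$, so by Cheeger's inequality the spectral gap is $\Omega(1/N^2)$, forcing merging in $O(N^2 \log N)$ time. The family $\{K^{(t)}\}$ will cycle through a carefully-chosen collection so as to jointly admit a common near-invariant function $\varphi$, despite each $\mu^{(t)}$ being nearly uniform.

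\emph{Witness function.} I aim to produce $\varphi : V_N \to [-1, 1]$ with $\varphi(0)=1$, $\varphi(N)=-1$ and $\|K^{(t)}\varphi - \varphi\|_\infty \le C_1 e^{-\alpha N}$ uniformly in $t$, for constants $C_1$ and $\alpha = \alpha(\epsilon)>0$. Naive linear $\varphi(x) = 1 - 2x/N$ fails because any symmetric kernel produces a boundary defect of order $1/N$, yielding only polynomial merging. The actual $\varphi$ must instead be shaped with nearly vanishing gradient near the endpoints, while the interior perturbations of the cycled kernels are tuned to cancel the residual discrete-Laplacian defect to exponential precision.

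\emph{Telescoping and conclusion.} Given such $(\{K^{(t)}\}, \varphi)$, the Markov property yields
\[
    \mathbb{E}_x[\varphi(X_T)] - \varphi(x) = \sum_{s=0}^{T-1} \mathbb{E}_x\bigl[K^{(s)} \varphi(X_s) - \varphi(X_s)\bigr],
\]
bounded in absolute value by $T C_1 e^{-\alpha N}$. For $T \le (4C_1)^{-1} e^{\alpha N}$ this gives $|\mathbb{E}_0[\varphi(X_T)] - \mathbb{E}_N[\varphi(X_T)]| \ge 2 - 1/2 = 3/2$, and since $\|\varphi\|_\infty \le 1$, the duality $|\mathbb{E}_\nu[\varphi] - \mathbb{E}_{\nu'}[\varphi]| \le 2\|\varphi\|_\infty \|\nu - \nu'\|_{TV}$ forces $\|K_{0,T}(0,\cdot) - K_{0,T}(N,\cdot)\|_{TV} \ge 3/4 > 1/2$. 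Hence $T_{TV}(1/2) \ge (4C_1)^{-1} e^{\alpha N}$, giving the TV claim after renaming $\alpha$. For relative-sup merging, note that if the relative-sup is below $1$ then $K_{0,T}(x,z) < 2 K_{0,T}(y,z)$ for all $z$, and summing gives $\|K_{0,T}(x,\cdot) - K_{0,T}(y,\cdot)\|_{TV} < 1/2$; therefore $T_\infty(1/2) \ge T_\infty(1) \ge T_{TV}(1/2)$.

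\emph{Main obstacle.} The crux is the adversarial construction of $(\{K^{(t)}\}, \varphi)$ achieving the exponentially small conservation defect. The near-uniformity of each $\mu^{(t)}$ restricts the cumulative ``log-drift'' of every single kernel across $V_N$ to $O(\epsilon)$, ruling out macroscopic single-kernel potential barriers; the exponential quasi-conservation must therefore emerge from coherent cancellation across the cycle of time-varying kernels. Precisely because the inter-step variation $\|\mu^{(t+1)} - \mu^{(t)}\|_\infty$ is of order $\epsilon/N$, which for $N$ large dominates the single-step spectral gap $\Omega(1/N^2)$, the usual $\ell^2$ contraction in $\ell^2(\mu^{(t)})$ fails to compose into polynomial mixing, and this is what leaves room for the adversarial construction to slow merging to exponential time.
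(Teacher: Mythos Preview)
Your approach has a fundamental obstruction that you have essentially identified yourself but not followed to its conclusion. You correctly note that each $K^{(t)}$ has spectral gap $\gamma \ge c/N^2$. But this bound \emph{forbids} the existence of any $\varphi$ with $\varphi(0)=1$, $\varphi(N)=-1$ and $\|K^{(t)}\varphi - \varphi\|_\infty \le C_1 e^{-\alpha N}$. Indeed, writing $\varphi = \mu^{(t)}(\varphi) + \psi$ with $\psi \perp 1$ in $L^2(\mu^{(t)})$, self-adjointness of $K^{(t)}$ on $L^2(\mu^{(t)})$ gives $\|(K^{(t)}-I)\psi\|_{L^2(\mu^{(t)})} \ge \gamma \|\psi\|_{L^2(\mu^{(t)})}$, while trivially $\|(K^{(t)}-I)\psi\|_{L^2(\mu^{(t)})} \le \|K^{(t)}\varphi - \varphi\|_\infty$. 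Since $\mu^{(t)}(x) \ge c'/N$ uniformly, this forces $\|\varphi - \mu^{(t)}(\varphi)\|_\infty \le C N^{5/2} e^{-\alpha N} \to 0$, contradicting $|\varphi(0)-\varphi(N)|=2$. Your telescoping sum bounds each summand separately, so the ``coherent cancellation across the cycle'' you invoke cannot enter: the scheme genuinely requires every individual $K^{(t)}$ to preserve $\varphi$ to exponential accuracy, which is impossible.

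The paper's construction avoids test functions entirely. It assigns conductances on $V_N$ that alternate in a space-time parity pattern, so that while each $K^{(t)}$ is individually near-symmetric (hence has $\Omega(1/N^2)$ gap and near-uniform $\mu^{(t)}$), the \emph{composition} of consecutive kernels produces a net drift toward $0$ on $[0,N/2]$ and toward $N$ on $[N/2,N]$. This drift is made precise via an auxiliary two-state chain tracking parity, and an excursion/coupling argument then shows directly that a walk started at $0$ stays in $[0,N/2)$ for time $e^{\alpha N}$ with high probability (and symmetrically from $N$). Non-merging follows by testing against the set $A=[0,N/2]$. The mechanism is thus dynamical rather than spectral: the slow merging comes from a drift that exists only at the level of the time-inhomogeneous product and is invisible to any single $K^{(t)}$.
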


\subsection{Framework and main results}\label{framework}
Let $\bG_\infty=(V,E)$ be an infinite locally finite connected graph with vertex set $V$ and edge set $E$, allowing for multiple edges and self-loops. Consider an increasing sequence of subgraphs $\bG_t=(V,E_t)$ of $\bG_\infty$, having the same set $V$ of vertices and increasing sets of edges $E_t\subseteq E_{t+1}...\subseteq E$. Write
\begin{align*}
V_t:=\{x\in V:\, \exists y\in V, \, y\neq x \text{ such that }(x,y)\in E_t\}
\end{align*}
for the set of non-isolated vertices at time $t$. We will identify $\G_t$ with $(V_t,E_t)$ without loss of generality, rendering every $\G_t$ connected. Throughout we use $|\cdot|$ to denote cardinality. For each $t\in\N$, let
\begin{align}\label{def:degree}
\pi^{(t)}(x,y):=\left|\{e\in E_t:\, e=(x,y)\}\right|
\end{align}
be the number of multiple edges between vertices $x,y\in V_t$, with $\pi^{(t)}(x,x)$ counting self-loops at $x$. \footnote{Setting $\pi^{(t)}(x,y)\equiv 0$ for $(x,y)\in E\backslash E_t$, we may also identify $\G_t$ with $(V,E,\Pi_t)$ as we did when discussing general conductance models on pages 1-2, although we will not be using this notation in the sequel.}
Let for any $x\in V_t$, $A, B\subseteq V_t$
\begin{align}\label{def:pi}
\pi^{(t)}(x):=\sum_{y\in V_t}\pi^{(t)}(x,y), \quad \pi^{(t)}(A,B):=\sum_{x\in A,\, y\in B}\pi^{(t)}(x,y), \quad  \pi^{(t)}(A):=\sum_{x\in A}\pi^{(t)}(x).
\end{align}
That $\{\bG_t\}$ is an increasing sequence of subgraphs means that both $t\mapsto V_t$ and $t\mapsto\pi^{(t)}(x)$ are non-decreasing.
We assume throughout that all $\bG_t$, $t<\infty$, are finite graphs, i.e. $|V_t|<\infty$, for which we define the following quantities. Let the volume of $\bG_t$ be denoted by
\begin{align}\label{def:vol}
v(t):=\pi^{(t)}(V_t),
\end{align}
and its isoperimetric function (with convention $\inf\emptyset=\infty$)
\begin{align}\label{def:isop}
\phi_t(r):=\inf_{\substack{A\subseteq V_t:\,\\
 \pi^{(t)}(A)\le \frac{v(t)}{2}\wedge r}}
\Big\{\frac{\pi^{(t)}(A,A^c)}{\pi^{(t)}(A)}\Big\}, \quad r\ge 0
\end{align}
whereby the Cheeger constant (aka bottleneck ratio) is thus
\begin{align}\label{def:cheeger}
\Phi_t:=\phi_t\big(\frac{v(t)}{2}\big).
\end{align}
Consider simple random walk (\abbr{SRW}) $\{X_t\}_{t\in\N}$ with $X_0=x_0\in V_0$ on the increasing sequence of subgraphs $\{\bG_t\}$, having time-inhomogenous Markovian transition probability at each step $t$:
\begin{align}\label{eq:kernel}
P(t,x;t+1,y):=\bP(X_{t+1}=y|X_t=x)=\frac{\pi^{(t)}(x,y)}{\pi^{(t)}(x)}, \quad x,y\in V_t.
\end{align}
We now state a transition probability upper bound for $\{X_t\}$ on $\{\bG_t\}$. 

\begin{thm}\label{thm-main}
Assume $\{X_t\}$ is uniformly $\gamma$-lazy, i.e. $P(t,x;t+1,x)\ge \gamma$ for some $\gamma\in(0,1/2]$ and all $x\in V_t$. Then for any $\alpha\in(0,1)$ and $c_+=\frac{2\alpha(1-\alpha)\gamma^2}{(1-\gamma)^2}$, all $x_0\in V_0$, $y\in V_t$ and $t\ge2$,
\begin{align}\label{first-bd}
P(0,x_0;t,y)\le \min_{1\le s\le t-1}\left\{\frac{2\pi^{(t)}(y)}{v(s)}+\pi^{(t)}(y)^{1-\alpha}L^{(s)}_t\right\},
\end{align}
where $L^{(s)}_t$ is iteratively determined from $L^{(s)}_s=\pi^{(0)}(x_0)^{\alpha-1}$ and
\begin{align}\label{eq:iter}
L^{(s)}_{u+1}:=\argmax\Big\{\ell:\,\int_{\ell/2}^{L^{(s)}_u/2}\frac{dz}{c_+z\phi^2_u\big(z^{\frac{1}{\alpha-1}}\big)}\ge1\Big\}, \quad s\le u\le t-1.
\end{align}
In particular, assuming $\pi^{(t)}(y)$ are uniformly bounded by constant $\Delta$, then for some finite $C=C(\gamma,\Delta)$, $c_\star=\frac{\gamma^2}{2(1-\gamma)^2}$, and all $x_0\in V_0$, $y\in V_t$, $t\ge2$,
\begin{align}\label{second-bd}
P(0,x_0;t,y)\le C\Big(\frac{1}{v(\lfloor t/2\rfloor)}+e^{-c_\star\sum_{u=\lfloor t/2\rfloor}^{t-1}\Phi^2_u}\Big).
\end{align}
\end{thm}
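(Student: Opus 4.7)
The plan is to adapt the evolving-sets method of Morris--Peres \cite{MP1,MP}, in the time-inhomogeneous form developed in \cite{DHMP}, to the degenerate growing-graph setting considered here. Construct on an enlarged probability space a set-valued process $\{S_u\}_{u\ge 0}$ with $S_0 = \{x_0\}$, sampling $U_u \sim \mathrm{Unif}[0,1]$ at each step and setting
\begin{align*}
S_{u+1} := \left\{ y \in V_{u+1} : \frac{\sum_{x \in S_u} \pi^{(u)}(x)\, P(u,x;u+1,y)}{\pi^{(u+1)}(y)} \ge U_u \right\}.
\end{align*}
Reversibility of $P(u,\cdot;u+1,\cdot)$ with respect to $\pi^{(u)}$, combined with the edge-monotonicity $\pi^{(u+1)} \ge \pi^{(u)}$, yields the key identity
\begin{align*}
P(0,x_0;t,y) = \pi^{(t)}(y)\, \mathbb{E}\bigl[\mathbf{1}_{y \in S_t}/\pi^{(t)}(S_t)\bigr],
\end{align*}
and makes $u \mapsto \pi^{(u)}(S_u)$ a submartingale (a martingale in the static case).

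Introduce $\tau := \inf\{u \ge s : \pi^{(u)}(S_u) \ge v(u)/2\}$ and split the right-hand side of the key identity on $\{\tau \le t\}$ versus $\{\tau > t\}$. On $\{\tau \le t\}$, a Morris--Peres-style complementary-set argument (whose dual construction survives in the growing setting thanks to monotonicity of $v$) gives $\mathbb{E}[\mathbf{1}_{y \in S_t}/\pi^{(t)}(S_t);\,\tau \le t] \le 2/v(s)$, producing the first term $2\pi^{(t)}(y)/v(s)$. On $\{\tau > t\}$, use the pointwise inequality $\mathbf{1}_{y \in S_t}/\pi^{(t)}(S_t) \le \pi^{(t)}(y)^{-\alpha}\, \pi^{(t)}(S_t)^{-(1-\alpha)}$, valid because $y \in S_t$ forces $\pi^{(t)}(S_t) \ge \pi^{(t)}(y)$, reducing the task to bounding $\mathbb{E}[\pi^{(t)}(S_t)^{-(1-\alpha)}\mathbf{1}_{\tau > t}] \le L^{(s)}_t$.

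The core step is a one-step drift inequality: for $u < \tau$,
\begin{align*}
\mathbb{E}\bigl[\pi^{(u+1)}(S_{u+1})^{-(1-\alpha)}\bigm|\mathcal{F}_u\bigr] \le \pi^{(u)}(S_u)^{-(1-\alpha)}\bigl(1 - c_+\,\phi_u^2(\pi^{(u)}(S_u))\bigr).
\end{align*}
This follows from a second-order Taylor expansion of $\psi_\alpha(z) = z^{-(1-\alpha)}$ (yielding the factor $2\alpha(1-\alpha)$ in $c_+$), combined with the Cauchy--Schwarz-based boundary-mass estimate of \cite{MP}, whose $\gamma^2/(1-\gamma)^2$ factor comes from the uniform laziness. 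Comparing the resulting discrete recursion $\ell_{u+1}/\ell_u \le 1 - c_+\phi_u^2(\ell_u^{1/(\alpha-1)})$ with the continuous flow $d\log\ell = -c_+\phi_u^2(\ell^{1/(\alpha-1)})\, du$ produces exactly the implicit recursion defining $L^{(s)}_{u+1}$ in \eqref{eq:iter}, completing \eqref{first-bd}. For \eqref{second-bd}, specialize $\alpha = 1/2$ (so $c_+ = c_\star$) and $s = \lfloor t/2\rfloor$; on $\{u < \tau\}$ we have $\pi^{(u)}(S_u) \le v(u)/2$, hence $\phi_u(\pi^{(u)}(S_u)) \ge \Phi_u$, and the recursion telescopes into the geometric factor $\exp\bigl(-c_\star\sum_{u = \lfloor t/2\rfloor}^{t-1}\Phi_u^2\bigr)$, with the uniform degree bound $\pi^{(t)}(y) \le \Delta$ absorbed into $C$.

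The main technical hurdle is the one-step drift inequality above. In the reversible static case, an exact identity links $\mathbb{E}[\psi(\pi(S_{n+1}))\mid S_n]$ to a symmetric boundary quadratic form; here, the change of reference measure from $\pi^{(u)}$ to $\pi^{(u+1)}$ breaks that symmetry, and edge-monotonicity must be deployed carefully to preserve a one-sided inequality in the favourable direction. A related subtlety is the complementary-set argument on $\{\tau \le t\}$: the complement of an evolving set in the growing setting is not exactly a dual evolving set, so the $2\pi^{(t)}(y)/v(s)$ bound must be obtained directly from the submartingale property of $\pi^{(u)}(S_u)$ together with monotonicity of the volumes $v(u)$.
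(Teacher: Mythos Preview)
Your overall strategy---evolving sets, split into a large-set and a small-set contribution, one-step drift inequality controlled by isoperimetry, then iterate---is exactly the paper's. Two points where the paper's execution is cleaner than what you outline:

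First, $\{\pi^{(u)}(S_u)\}$ is in fact a genuine \emph{martingale}, not merely a submartingale: with the update rule using $\pi^{(u+1)}(y)$ in the denominator one has $\mathbb{E}[\pi^{(u+1)}(S_{u+1})\mid S_u]=\sum_y\pi^{(u+1)}(y)\cdot\pi^{(u)}(S_u,y)/\pi^{(u+1)}(y)=\pi^{(u)}(S_u)$ exactly (this is \cite[Lemma~2.1]{DHMP}). Second, and more importantly, the paper avoids the complementary-set argument you flag as a hurdle. It works with the probability form of the duality, $P(0,x_0;t,y)=\frac{\pi^{(t)}(y)}{\pi^{(0)}(x_0)}\,\mathbb{P}_{\{x_0\}}(y\in S_t)$, and simply bounds $\mathbb{P}(y\in S_t,\,A_t^c)\le\mathbb{P}(A_t^c)\le 2\pi^{(0)}(x_0)/v(s)$ by Doob's maximal inequality applied to the martingale, with the \emph{fixed} threshold $v(s)/2$ (your $\tau$ uses the moving $v(u)/2$). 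This yields the first term of \eqref{first-bd} in one line, with no need to dualize or to worry that complements of evolving sets are not evolving sets in the growing setting.

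For the small-set contribution the paper does pass to the size-biased law $\widehat{\mathbb{E}}$, but only after establishing the drift $\mathbb{E}[\pi^{(u+1)}(S_{u+1})^{\alpha}\mid\mathcal{F}_u]\le\pi^{(u)}(S_u)^{\alpha}(1-c_+R_u^2)$ for the \emph{positive} power under the original law (your inequality for the negative power $-(1-\alpha)$ only makes sense under $\widehat{\mathbb{E}}$, since $S_{u+1}=\emptyset$ has positive probability under $\mathbb{E}$; you should make the change of measure explicit). The passage from the one-step drift to the integral recursion \eqref{eq:iter} is then \cite[Lemma~12]{MP}, just as you describe.
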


\begin{remark}\label{rmk:why}
The second bound (\ref{second-bd}) is effective only when $\{\bG_t\}$ have slow growth or good connectivity, so that there is some mixing phenomenon happening. In such cases one expects the transition probability to not exceed constant over the volume. The first bound (\ref{first-bd}) works for more general cases, but explicit expressions are harder to obtain, due to the dual effects of mixing to uniform in finite graphs $\bG_t$ and behaving as \abbr{SRW} in the limiting infinite graph $\bG_\infty$, depending on slow or fast growth in different regions.
\end{remark}

Our definition of recurrence and transience is special, due to a lack of zero-one law.

\begin{defn}\label{def:rec}
The stochastic process $\{X_t\}$ with $X_0=x_0\in V_0$ is called (strong) transient, if the expected number of returns to $x_0$ (hence also to every other point) is finite, i.e. $\E_{x_0}[N_0]<\infty$ for $N_0:=\sum_{t=0}^\infty1_{\{X_t=x_0\}}$. Conversely, it is called (weak) recurrent, if $\E_{x_0} [N_0]=\infty$.
\end{defn}

\begin{remark}
When 
\begin{align}\label{pal-zyg}
\limsup_{k\to\infty}\Big\{\frac{\E_{x_0}[N_0(k)]^2}{\E_{x_0}[N_0(k)^2]}\Big\}>0,
\end{align}
for $N_0(k):=\sum_{t=0}^k1_{\{X_t=x_0\}}$, one can further deduce $\P_{x_0}(N_0=\infty)>0$ from $\E_{x_0}[N_0]=\infty$ by applying Paley-Zygmund inequality (cf. \cite[Lemma 2.1]{GP}).
\end{remark}

We deduce from \eqref{second-bd} a sufficient transience criterion.
\begin{ppn}\label{ppn:tra}
Assume $\{X_t\}$ is uniformly lazy and $\{\G_t\}$ have uniformly bounded degrees. Then $$
\sum_{t=1}^\infty v(t)^{-1}<\infty 
$$
is sufficient criterion for $\{X_t\}$ to be transient, if in addition \begin{align}\label{eq:mixing}
\sum_{t=1}^\infty\exp\Big\{-\sum_{u=\lfloor t/2\rfloor}^{t-1}\Phi_u^2\Big\}<\infty. 
\end{align}
\end{ppn}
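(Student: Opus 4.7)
The plan is to derive transience directly from Definition \ref{def:rec} by showing that $\E_{x_0}[N_0] = \sum_{t=0}^\infty P(0,x_0;t,x_0) < \infty$ under the two summability hypotheses. This reduces the task to bounding the diagonal transition probability $P(0,x_0;t,x_0)$ in a way that makes the series absolutely summable, and the natural tool is the bound \eqref{second-bd} from Theorem \ref{thm-main}, which is applicable precisely because we have assumed uniform laziness and uniformly bounded degrees.

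First I would verify that the hypotheses of Theorem \ref{thm-main} are in force: uniform $\gamma$-laziness supplies the required $\gamma\in(0,1/2]$, and uniformly bounded degrees supply the constant $\Delta$ used to derive \eqref{second-bd} from \eqref{first-bd}. Applying \eqref{second-bd} with $y=x_0$ yields, for all $t\ge 2$ and some finite $C=C(\gamma,\Delta)$ and $c_\star>0$,
\begin{align*}
P(0,x_0;t,x_0)\le C\left(\frac{1}{v(\lfloor t/2\rfloor)} + \exp\Bigl\{-c_\star\sum_{u=\lfloor t/2\rfloor}^{t-1}\Phi_u^2\Bigr\}\right).
\end{align*}

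Next I would sum this bound over $t\ge 2$, handling the two terms separately. For the first term, since $v$ is non-decreasing, the change of variable $s=\lfloor t/2\rfloor$ gives $\sum_{t\ge 2} v(\lfloor t/2\rfloor)^{-1} \le 2\sum_{s\ge 1} v(s)^{-1}$, which is finite by the first hypothesis. For the second term, I would appeal directly to \eqref{eq:mixing}, which, after absorbing the positive constant $c_\star$ into the exponent via a standard elementary estimate (splitting into the cases where the exponent is bounded and where it tends to infinity, or equivalently using $e^{-c_\star S}\le e^{-S}$ once $S$ is large and bounding the finitely many small-$S$ terms by a constant), still yields a convergent series. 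Adding the contributions of $t=0,1$ trivially, the total sum $\E_{x_0}[N_0]$ is finite, which is precisely the definition of (strong) transience.

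The argument has essentially no obstacle beyond bookkeeping: the hard analytic work has already been done in Theorem \ref{thm-main}, and the only mildly delicate point is the factor $c_\star$ in the exponent of the second term, which is harmless since \eqref{eq:mixing} is insensitive to multiplying the exponent by a positive constant (one may equivalently restate \eqref{eq:mixing} with $c_\star$ in front, as the convergence of the resulting series is equivalent up to a finite additive correction). No use of the Paley--Zygmund strengthening \eqref{pal-zyg} is needed here, since we only need to conclude $\E_{x_0}[N_0]<\infty$.
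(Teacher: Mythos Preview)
Your approach is exactly what the paper intends: Proposition~\ref{ppn:tra} is stated immediately after Theorem~\ref{thm-main} with the words ``We deduce from \eqref{second-bd} a sufficient transience criterion,'' and no separate proof is given. Summing \eqref{second-bd} at $y=x_0$ over $t$, bounding $\sum_t v(\lfloor t/2\rfloor)^{-1}$ by $2\sum_s v(s)^{-1}$ via monotonicity of $v$, and invoking the two hypotheses is precisely the implicit argument.

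There is, however, a genuine (if minor) error in your handling of the constant $c_\star$. You claim that $e^{-c_\star S}\le e^{-S}$ once $S$ is large, and that the convergence of $\sum_t e^{-S_t}$ and $\sum_t e^{-c_\star S_t}$ are equivalent ``up to a finite additive correction.'' Both claims are false when $c_\star<1$, which is always the case here since $c_\star=\gamma^2/\bigl(2(1-\gamma)^2\bigr)\le 1/2$ for $\gamma\in(0,1/2]$. For a concrete counterexample take $S_t=2\log t$: then $\sum_t e^{-S_t}=\sum_t t^{-2}<\infty$ but $\sum_t e^{-c_\star S_t}=\sum_t t^{-2c_\star}=\infty$ whenever $c_\star\le 1/2$. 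Such a sequence is realizable as $S_t=\sum_{u=\lfloor t/2\rfloor}^{t-1}\Phi_u^2$ by choosing $\Phi_u^2\sim (4\log u)/u$, which is a legitimate Cheeger-constant profile.

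This is really a slight imprecision in the proposition as stated rather than a flaw in your strategy: the hypothesis \eqref{eq:mixing} ought to carry $c_\star$ (or some positive constant depending on $\gamma$) in the exponent. In all the paper's applications (Example~\ref{ex:zd} and the expander example) the exponent grows at least like a positive power of $t$, so inserting a constant is harmless. But your attempted justification does not close the gap between \eqref{eq:mixing} as written and what \eqref{second-bd} actually delivers; the honest fix is the one you mention parenthetically, namely to restate \eqref{eq:mixing} with the constant included.
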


We give below an example of growing domains in $\bG_\infty=\bZ^d$ to illustrate the use of Theorem \ref{thm-main} and Proposition \ref{ppn:tra}.

\begin{ex}\label{ex:zd}
Consider \abbr{SRW} $\{X_t\}$ on $\bD_t\subseteq\bD_{t+1}...\subseteq\D_\infty=\bZ^d$, $d>2$. Assume each $\bD_t$ has Cheeger constant $\Phi_t\ge c_dv(t)^{-1/d}$, which is the best possible and is satisfied for sufficiently regular sets. Assume $t\mapsto v(t)$ is of polynomial growth, that is, $0<\liminf v(t)/t^\beta\le \limsup v(t)/t^\beta<\infty$ for some $\beta>0$. Then for $\beta<d/2$,
\begin{align*}
\sum_{t=1}^\infty e^{-\sum_{u=\lfloor t/2\rfloor}^{t-1}\Phi^2_u}
=\sum_{t=1}^\infty e^{-c(d)t^{1-2\beta/d}}<\infty.
\end{align*}
Combined with the convergence of $\sum_t v(t)^{-1}$ when $\beta>1$, we conclude that $\beta\in(1,d/2)$ is sufficient for $\{X_t\}$ to be transient.

For faster growth of $\beta\ge d/2$, the second bound (\ref{second-bd}) cannot provide any information since the behavior of $\{X_t\}$ is now closer to the \abbr{SRW} on $\D_\infty=\bZ^d$. One may appeal to the first bound (\ref{first-bd}) instead, but then needs to know the whole isoperimetric function $\phi_t(r)$ of (\ref{def:isop}) rather than just the Cheeger constant $\Phi_t$ of (\ref{def:cheeger}).
For example, if one knew $\phi_t(r)\ge c_d\left(r\wedge \frac{v(t)}{2}\right)^{-1/d}$ for all $t$ and $r$, then (\ref{first-bd})-(\ref{eq:iter}) yield after some algebra, with $s=\lfloor t/2\rfloor$,
\begin{align*}
P(0,x_0;t,y)\le C(d,\alpha,\gamma)\Big(t^{-\frac{d(1-\alpha)}{2}}\vee v(t)^{-(1-\alpha)}\Big).
\end{align*}
Recall $d>2$, and for every $\beta\ge d/2$ we can choose $\alpha\in\left(0,1-\frac{2}{d}\right)$ that makes the \abbr{RHS} integrable, thereby extending the transience conclusion from $\beta\in(1,d/2)$ to all $\beta>1$.
\end{ex}

We give another example of growing subgraphs $\{\G_t\}$ that have fast mixing properties that are expander-like.
\begin{ex}
Consider $\bG_t\subseteq\bG_{t+1}...\subseteq\bG_\infty$ of uniformly bounded degrees, that satisfy $\Phi_t\ge \delta$ for some uniform constant $\delta>0$ and all $t$. 
Then clearly $\sum_{t=1}^\infty v(t)^{-1}<\infty$ is sufficient for $\{X_t\}$ to be transient, as \eqref{eq:mixing} is already satisfied.
\end{ex}

\subsection{Sufficient conditions for recurrence}
For much more specialized cases, we establish matching lower bounds. To this end, we introduce additional definitions and notations.
The inner boundary of a subgraph $\bH$ {\emph{relative to $\G_\infty$}} is denoted by
\begin{align*}
\partial \bH:=\{x\in \bH:\, \exists y\in \G_\infty\backslash \bH \text{ such that }(x,y)\in E\},
\end{align*}
and by $\tau_\bH$ is denoted the first hitting time of $\partial\bH$
\begin{align*}
\tau_\bH:=\{t\ge0:\, X_t\in\partial\bH\}.
\end{align*}

\begin{defn}\label{def:induce}
We say that the edge set of subgraph $\bH\subseteq\G_\infty$ is {\emph{induced from}} $\G_\infty$, if for every $x,y\in\bH$, we have $(x,y)\in E(\bH)$ whenever $(x,y)\in E$. In such case, we abbreviate $v(\bH):=\pi^{(\infty)}(\bH)$.
\end{defn}

Let $d(x,y)$ denote the graph distance between $x,y$ in $\bG_\infty$, and $\B(x,R)$ the closed $\G_\infty$-ball of radius $R$ around $x\in V$.
Below we focus on increasing sequences $\{\bG_t\}$ that satisfy for some $x_0\in V_0$ and positive continuous non-decreasing function $r(t)$,
\begin{align}\label{ball-like} 
t\ge0, \quad \B(x_0,r(t))\subseteq \bG_t, \quad \lim_{t\to\infty}\frac{v(t)-v(\B(x_0,r(t)))}{v(t)}=0.
\end{align}
In words, such $\{\bG_t\}$ have the geometric property that each contains a complete (i.e. with edge set induced) ball of the limiting graph $\bG_\infty$, and the fluctuation at the boundary is of smaller order volume.

Let $p_t^m(x,y)$ denote the (Dirichlet) heat kernel of \abbr{SRW} $\{Y_t\}$ on $\bG_\infty$ that is killed upon first hitting $\partial\B(x_0,m)$, i.e.
\begin{align*}
t\ge0, \quad p_t^m(x,y):=\bP_x\left(Y_t=y,\, t<\tau_{\B(x_0,m)}\right),
\end{align*}
with $p_t(x,y)=\P_x(Y_t=y)$ the un-killed heat kernel.

\begin{defn}\label{def:dhk}
We say that $\bG_\infty$ satisfies the property LLE($\psi$)
{\footnote{Local lower estimate.}} 
with positive continuous non-decreasing function $\psi(\cdot)$, if for any $\delta\in(0,1)$,
there exists constant $c_{\text{HK}}>0$ depending on $\delta$ such that for all $m\ge 1$ and $x,y\in \B(x_0,(1-\delta)m)$,
\begin{align}\label{eq:dhk}
\text{LLE}(\psi):  \quad\quad  p^m_{\psi(m)}(x,y)\ge\frac{c_{\text{HK}}}{v(\B(x_0,m))}.
\end{align}
\end{defn}
\begin{remark}
For example, $\G_\infty=\Z^d$ satisfies property (\ref{eq:dhk}) with $\psi(m)=m^2$.
More generally, this property is satisfied for any infinite graph that admits two-sided heat kernel estimates of the following form: for some $\beta_2\ge\beta_1\ge1$, constants $A_1,...,A_5$, and all $x,y\in V$ with $1\vee d(x,y)\le t$,
\begin{align}
\text{HKE}(\psi) : \quad &\frac{p_t(x,y)}{\pi(y)}\le\frac{A_1}{v(\B(x,\psi^{-1}(t)))}\exp\Big\{-A_2\Big(\frac{\psi(d(x,y))}{t}\Big)^{\frac{1}{\beta_2-1}}\Big\},\label{sub-gauss-1}\\
&\frac{p_t(x,y)+p_{t+1}(x,y)}{\pi(y)}\ge\frac{A_3}{v(\B(x,\psi^{-1}(t)))}\exp\Big\{-A_4\Big(\frac{\psi(d(x,y))}{t}\Big)^{\frac{1}{\beta_2-1}}\Big\},\label{sub-gauss-2}
\end{align}
when $\psi(\cdot)$ satisfies
\begin{align}\label{eq:poly-growth}
\Big(\frac{R}{r}\Big)^{\beta_1}\le\frac{\psi(R)}{\psi(r)}\le A_5\Big(\frac{R}{r}\Big)^{\beta_2}, \quad \forall\, 0< r\le R,
\end{align}
where $\psi^{-1}$ is the inverse of $\psi$.
The case when $\psi(m)=m^\beta$, $\beta\ge 2$, is called sub-Gaussian heat kernel estimates, with $\beta=2$ the Gaussian case. In fact, the properties LLE($\psi$) (\ref{eq:dhk}) and HKE($\psi$) (\ref{sub-gauss-1})-(\ref{sub-gauss-2}) are equivalent under (\ref{eq:poly-growth}). This is well known to experts, see for example \cite[Theorem 3.2]{BGK} in the framework of metric measure spaces equipped with strongly local regular symmetric Dirichlet forms. 
The HKE($\psi$) are in turn equivalent to a combination of volume doubling property (see Definition \ref{def:vd}), Poincar\'e inequality of scale $\psi$ and a family of cut-off Sobolev inequalities of scale $\psi$ (see \cite[Theorem 2.16]{BBK} for definitions of the latter two concepts and other details). Thus many fractal graphs are also included (e.g. pre-Sierpinski gaskets, vicsek sets, and more generally the so-called nested fractals with $\beta=d_w$ the walk dimension of the graph).
\end{remark} 

\begin{defn}\label{def:vd}
We say that a positive non-decreasing function $s\mapsto f(s)$ is doubling, if $f(2s)\le Df(s)$ for some $D<\infty$ and all $s>0$. 
\end{defn}

\begin{ppn}\label{ppn-main}
Assume $\{X_t\}$ is uniformly $\gamma$-lazy and $\pi^{(t)}(x)$ are uniformly bounded by constant $\Delta$. Assume further that $\G_\infty$ satisfies property LLE($\psi$) of Definition \ref{def:dhk} as well as
\begin{align}   \label{cond:fluct}
\lim_{\delta\downarrow 0}\limsup_{m\to\infty}\left[1-\frac{v(\B(x_0, (1-\delta)m))}{v(\B(x_0, m))}\right]=0\, ;
\end{align}
the increasing sequence $\{\bG_t\}$ satisfies the geometry condition (\ref{ball-like}), the regularity condition
\begin{align}\label{eq:regularity}
\liminf_{t\to\infty}\left\{\frac{\sum_{u=\lfloor t/2\rfloor}^{t-1}\Phi^2_u}{\log v(\lfloor t/2\rfloor)}\right\}=:\zeta>0,
\end{align}
and $t\mapsto v(t)$ is a doubling function with constant $D$. Then for some positive $c=c(\gamma, \Delta, c_{\text{HK}}, \\
\zeta, D)$ and $\delta_0=\delta_0(\gamma,\Delta,\zeta, D)\le\frac{1}{2}$, all $t\ge 2$, $y\in V_t$ with $d(x_0,y)\le (1-\delta_0)(r^{-1}+\psi)^{-1}(t)$,
\begin{align}\label{eq:lbd}
P(0,x_0;t,y)\ge\frac{c}{v(t)}.
\end{align}
\end{ppn}

\begin{remark}
Thus, $\sum_{t=1}^\infty v(t)^{-1}=\infty$ is sufficient for $\{X_t\}$ to be (weak) recurrent, in the setting of Prop. \ref{ppn-main}. Combined with Proposition \ref{ppn:tra}, it establishes a sharp phase transition between transience and recurrence.
\end{remark}

\begin{remark}
The condition (\ref{eq:regularity}) is to guarantee that the second term in the upper bound (\ref{second-bd}) is absorbed in the first term. The main restriction of Prop. \ref{ppn-main}, compared to Theorem \ref{thm-main}, is the ball-like assumption (\ref{ball-like}) on the geometry of $\{\G_t\}$.
\end{remark}

We speculate that the on-diagonal lower bound, \eqref{eq:lbd} with $y=x_0$, holds in great generality, that is, for any growing $\G_t\uparrow\G_\infty$ of uniformly bounded degrees (though it may not be sharp); the isoperimetry of $\{\G_t\}$ and $\G_\infty$ will play a role in sharp lower estimates (including off-diagonal). Unfortunately, the cases we can handle now assume much more structures. It would be of much interest to bypass the isoperimetry \eqref{eq:regularity}, so as to resolve e.g. \cite[Conjecture 1.2]{DHS}.

\medskip
We deal with below $\{X_t\}$ on another class of slowly increasing $\{\bG_t\}$, comparable to ``simple" sets, where we can conclude (weak) recurrence without establishing lower heat kernel estimates.
The setting is again specialized, in that the subgraph is frozen for a period of time then changes to a much larger subgraph, but here we do not impose regularity assumptions on $\{\bG_t\}$ as done in (\ref{eq:regularity}). It is closely related to (but does not resolve) \cite[Conj. 1.2]{DHS}, which conjectures the recurrence of {\emph{any}} growing domains $\D_t$ of $\Z^d$, $d\ge 3$, which satisfy $\B_{f(t)}\subseteq\D_t\subseteq\B_{Cf(t)}$, as soon as $\int_1^\infty f(t)^{-d}dt=\infty$, the opposite direction being already proven in \cite[Theorem 1.4(a)]{DHS}. 

To preceed, let $\{t_l\}_{l\in\N}$ be an increasing sequence of integers, and $\{\K_l,\K^l\}_{l\in\N}$ be two sequences of increasing connected finite subgraphs of $\bG_\infty$. Assume $\{\bG_t\}$ satisfy the following three properties:

\begin{assumption}\label{sep-of-scale}
(a) $\{\K_l,\K^l\}$ are nested: for all $l\in\N$, $\K^{l-1}\subseteq\K_l\subseteq\K^l$. Their edge sets are {\emph{induced}} from $\G_\infty$, in the sense of Definition \ref{def:induce}. For $t_l\le t<t_{l+1}$, the graph is frozen:
\begin{align}\label{eq:sand}
\K_l\subseteq \bG_t=\bG_{t_l} \subseteq \K^l.
\end{align}
(b) For any $x,y\in\K^{l-1}$, the hitting distributions on $\partial\K_l$ of \abbr{SRW}-s starting from $x,y$ have uniformly bounded in $l$ Radon-Nikodyn density: for some constant $c_{\text{RN}}>0$ and all $x,y\in\K^{l-1}$, $z\in\partial\K_l$, $l\in\N$,
\begin{align}\label{eq:exit-dist}
c_{\text{RN}}\le \frac{\P_x(X_{\tau_{\K_l}}=z)}{\P_y(X_{\tau_{\K_l}}=z)}\le c_{\text{RN}}^{-1}.
\end{align}
The volume of $\K_l$ is of exponential growth in $l$:
\begin{align}\label{eq:growth}
\liminf_{l\to\infty}\{l^{-1}\log v(\K_l)\}>0.
\end{align}
(c) The exit time from $\K_l$ has light tails: for some positive $\ep$ and $c_E=c_E(\ep)$, all $l\in\N$, $s\ge 1$,
\begin{align}\label{eq:exit-tail}
\sup_{x\in\K_l}\bP_x\Big(\tau_{\K_l}>\frac{sv(\K_l)}{\big(\log v(\K_l)\big)^{2+\ep}}\Big)<c_E^{-1}e^{-c_Es}.
\end{align}
\end{assumption}

\begin{remark}
A main example that satisfies Assumption \ref{sep-of-scale} is $\bG_\infty=\Z^d$, $d>2$, with the ``simple" sets lattice balls in $\Z^d$: 
\begin{align}\label{simple-sets}
l\ge 1,\quad \K_l=\B(x_0,r_l), \quad \K^l=\B\big(x_0,r'_l\big), \quad r_{l+1}\ge(1+\delta)r'_l
\end{align}
for some fixed $\delta>0$. See \cite[Lemma 6.3.7]{LL} for (\ref{eq:exit-dist}), and \cite[Corollary 6.9.6]{LL} for (\ref{eq:exit-tail}).
 
In fact, the assumption (c) above can be guaranteed by good isoperimetry, such as if $\G_\infty$ satisfies the Faber-Krahn inequality of order $\theta>2$. That is,
for some $c_{\text{FK}}>0$, and all finite $A\subseteq V$,
\begin{align}\label{fk-ineq}
\lambda_1(A)\ge c_{\text{FK}}v(A)^{-2/\theta}, 
\end{align}
where $\lambda_1(A)$ denotes the smallest (Dirichlet) eigenvalue of the Laplace operator $L:=I-P$ on $\G_\infty$ with zero boundary condition on $V\backslash A$. The tail of $\tau_A$ is controlled by $\lambda_1(A)$, cf. \cite[section 6.9]{LL}. 

For assumption (b) it is sufficient to have a (scale invariant) elliptic Harnack inequality (\abbr{EHI}) on $\G_\infty$,  and as ``simple" sets the $\G_\infty$-balls of \eqref{simple-sets}. See \cite[Theorem 5.11]{BM} for a stable characterization of the \abbr{EHI}. In particular, the same HKE($\psi$) \eqref{sub-gauss-1}-\eqref{eq:poly-growth} would suffice. 
\end{remark}

\begin{ppn}\label{ppn:rec}
In the setting of Assumption \ref{sep-of-scale}, and further assume $\pi^{(t)}(x,x)\ge 1$ and $\pi^{(t)}(x)$ are uniformly bounded by constant $\Delta$. 
Then 
\begin{align}\label{eq:div}
\sum_{t=0}^\infty\frac{1}{v(t)}=\sum_{l=0}^\infty\frac{t_{l+1}-t_l}{v(t_l)}=\infty
\end{align}
is a sufficient criterion for $\{X_t\}$ to be (weak) recurrent. 
\end{ppn}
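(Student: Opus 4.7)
The plan is to decompose $\E_{x_0}[N_0] = \sum_{l\ge 0} \E_{x_0}[N_l]$ with $N_l := \#\{s\in[t_l,t_{l+1}) : X_s = x_0\}$, and to establish the per-block bound $\E_{x_0}[N_l] \ge c(t_{l+1}-t_l)/v(t_l)$ for some $c>0$ independent of $l$. Summing in $l$ and invoking \eqref{eq:div} then gives $\E_{x_0}[N_0]=\infty$. On the frozen interval $[t_l,t_{l+1})$ the walk is a time-homogeneous reversible Markov chain on the finite connected graph $\bG_{t_l}$ with stationary distribution $\pi^{(t_l)}/v(t_l)$; the self-loop hypothesis $\pi^{(t)}(x,x)\ge 1$ and bounded degree yield aperiodicity together with the uniform-in-$s$ diagonal bound $p_s^{(t_l)}(x_0,x_0) \ge c_1/v(t_l)$, via the Cauchy--Schwarz estimate $p^{(t_l)}_{2s}(x_0,x_0)\ge \pi^{(t_l)}(x_0)/v(t_l)$ combined with a one-step self-loop comparison from $2s$ to $2s{+}1$.

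The main step is to transfer this diagonal bound to the actual walk, whose position at time $t_l$ is random with unknown law $\mu_l$ supported on $\K^{l-1}$. Using reversibility,
\begin{align*}
\E_{x_0}[N_l] \;=\; \pi^{(t_l)}(x_0)\,\E^{(t_l)}_{x_0}\!\Big[\sum_{s=0}^{T_l-1} \mu_l(\tilde X_s)/\pi^{(t_l)}(\tilde X_s)\Big],
\end{align*}
where $\tilde X$ is SRW on $\bG_{t_l}$ started at $x_0$ and $T_l := t_{l+1}-t_l$, so the task reduces to lower-bounding the expected $\mu_l$-weighted occupation. Here the hypothesis becomes informative: by \eqref{eq:growth} we have $v(t_l)\ge v(\K_l)\ge e^{\alpha l}$, so any block with $T_l \le v(t_l)^\beta$ (for fixed $\beta<1$) contributes at most $e^{-(1-\beta)\alpha l}$, summable in $l$; thus $\sum_l T_l/v(t_l)=\infty$ forces the divergence to be carried by blocks with $T_l > v(t_l)^\beta$. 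On such blocks the exit-time tail \eqref{eq:exit-tail} at scale $l$ guarantees that $\tilde X$ completes many excursions from $\K^{l-1}$ to $\partial\K_l$ within $[0,T_l)$, and the Radon--Nikodym bound \eqref{eq:exit-dist} forces the walk's law at each re-entry into $\K^{l-1}$ to be comparable (up to a universal constant) to a canonical reference measure. A renewal-type averaging then shows the integrand is $\gtrsim 1/v(t_l)$ on a positive fraction of $s\in[0,T_l)$, yielding the desired block estimate.

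The main obstacle is keeping the renewal constants uniform in $l$: a naive iteration of \eqref{eq:exit-dist} across blocks threatens an $l$-dependent factor $c_{\text{RN}}^l$. I plan to address this by exploiting the induced-edge-set condition of Assumption~\ref{sep-of-scale}(a)---so that SRW on $\bG_{t_l}$ agrees with SRW on $\bG_\infty$ inside $\K_l$ until the first hit of $\partial\K_l$---and by pairing forward excursions from $\K^{l-1}$ to $\partial\K_l$ with their backward returns via reversibility, so that \eqref{eq:exit-dist} is invoked only once per round trip. A secondary point is controlling the time $\tilde X$ spends in $\bG_{t_l}\setminus\K_l\subseteq\K^l$ during excursions, which is done by applying \eqref{eq:exit-tail} at the next scale $l{+}1$ and absorbing the resulting polylogarithmic loss into the constants via sub-exponential decay in $s$.
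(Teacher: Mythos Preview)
Your skeleton matches the paper: block decomposition, the Cauchy--Schwarz diagonal bound $p^{(t_l)}_{2s}(x,x)\ge\pi^{(t_l)}(x)/v(t_l)$, reversibility, and discarding short blocks via \eqref{eq:growth}. Two points in the execution need correction.

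First, your threshold for discarding is too weak for what follows. Retaining blocks with $T_l>v(t_l)^\beta$ for $\beta<1$ only guarantees $T_l>v(\K_l)^\beta$, which does \emph{not} ensure $T_l\gg v(\K_l)/(\log v(\K_l))^{2+\ep}$ --- the scale required by the exit-time tail \eqref{eq:exit-tail} for even \emph{one} hit of $\partial\K_l$ before time $T_l$. (Nothing forbids $v(t_l)\asymp v(\K_l)$, in which case $v(t_l)^\beta\ll v(\K_l)/(\log v(\K_l))^{2+\ep}$.) The paper instead discards blocks with $T_l<4Mv(\K_l)/(\log v(\K_l))^{1+\ep/2}$; these are still summable since $v(t_l)\ge v(\K_l)$ gives $\sum_l(\log v(\K_l))^{-1-\ep/2}<\infty$.

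Second, the ``many excursions plus renewal'' mechanism is unnecessary, and the $c_{\text{RN}}^l$ worry is misplaced. The paper invokes \eqref{eq:exit-dist} exactly \emph{once} per block, at the \emph{first} hit of $\partial\K_l$ (not at re-entries into $\K^{l-1}$). The mechanism is: for any $x\in\K^{l-1}$, the diagonal bound gives $\E\big[\#\{s\in I_l:X_s=x\}\,\big|\,X_{t_l}=x\big]\ge c\,T_l/v(t_l)$; since $\tau_{\K_l}\ll T_l$ with overwhelming probability (under the corrected threshold), the portion of this local time accrued \emph{after} $\tau_{\K_l}$ is still $\gtrsim T_l/v(t_l)$; but the post-$\tau_{\K_l}$ local time is a measurable functional of the hitting position $X_{\tau_{\K_l}}\in\partial\K_l$, whose law under $\P(\cdot\mid X_{t_l}=x)$ versus $\P(\cdot\mid X_{t_l}=x_0)$ differs by at most $c_{\text{RN}}$ via \eqref{eq:exit-dist} and the induced-edge-set property you already noted. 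Hence $\E\big[\#\{s\in I_l:X_s=x\}\,\big|\,X_{t_l}=x_0\big]\gtrsim c_{\text{RN}}\,T_l/v(t_l)$, and your reversibility identity (equivalently, the swap $x\leftrightarrow x_0$) finishes the block. Since this bound is uniform over the entry point $x\in\K^{l-1}\supseteq V_{t_{l-1}}\ni X_{t_l}$, the measure $\mu_l$ plays no role beyond its support, and nothing accumulates across $l$.
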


\section{Proof of Theorem \ref{thm-main} and Proposition \ref{ppn-main}}\label{sec:2}
We use the method of evolving random sets (\cite{MP}) to obtain the upper bounds of Theorem \ref{thm-main}. We recall its definition in time-dependent case from \cite[Definition 1.12]{DHMP}, {\emph{valid for $t\mapsto\pi^{(t)}(x)$ non-decreasing}} of a time-varying graph $\G_t$. Starting with $S_0=\{x_0\}\subseteq V_0$, inductively construct $S_{t+1}\subseteq V_{t+1}$ from $S_t\subseteq V_t$ and an independent uniform $[0,1]$ random variable $U_{t+1}$, as follows
\begin{align}\label{rule}
S_{t+1}:=\Big\{y\in V:\, \frac{\pi^{(t)}(S_t,y)}{\pi^{(t+1)}(y)}>U_{t+1}\Big\}.
\end{align}
In particular, given $S_t$ the probability that a vertex $y$ is in $S_{t+1}$ is given by
\begin{align*}
\bP(y\in S_{t+1}|S_t)=\frac{\pi^{(t)}(S_t,y)}{\pi^{(t+1)}(y)}\in[0,1].
\end{align*}
It is clear that $|S_t|<\infty$ for all $t\in\N$ with probability one.
The following key lemma relating the evolving sets $\{S_t\}$ to the walk $\{X_t\}$ is proved in \cite[Lemma 2.1]{DHMP}.
\begin{lem}\label{evol-set-lem}
The sequence $\{\pi^{(t)}(S_t)\}$ is a martingale and for any $t\in\N$, $x_0\in V_0, y\in V_t$, 
\begin{align}\label{eq:set-walk}
P(0,x_0;t,y)=\frac{\pi^{(t)}(y)}{\pi^{(0)}(x_0)}\bP_{\{x_0\}}(y\in S_t).
\end{align}
\end{lem}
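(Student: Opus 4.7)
The plan is to derive both assertions directly from the construction rule \eqref{rule}. For the martingale property, I would condition on $S_t$: by \eqref{rule} each $y$ is placed into $S_{t+1}$ independently with probability $\pi^{(t)}(S_t,y)/\pi^{(t+1)}(y)$, which lies in $[0,1]$ precisely because of the monotonicity $\pi^{(t+1)}(y)\ge\pi^{(t)}(y)\ge\pi^{(t)}(S_t,y)$. Multiplying by $\pi^{(t+1)}(y)$, summing over $y$, and using the identity $\sum_y\pi^{(t)}(S_t,y)=\pi^{(t)}(S_t)$ (a simple regrouping of edges emanating from $S_t$) gives $\E[\pi^{(t+1)}(S_{t+1})\mid S_t]=\pi^{(t)}(S_t)$ in one line.

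For \eqref{eq:set-walk}, I would induct on $t$, the base $t=0$ being immediate from $S_0=\{x_0\}$. In the inductive step I would apply Chapman--Kolmogorov to $P(0,x_0;t+1,z)$ through an intermediate $y$ at time $t$, substitute the hypothesis for $P(0,x_0;t,y)$, and cancel the factor $\pi^{(t)}(y)$ against the denominator in $P(t,y;t+1,z)=\pi^{(t)}(y,z)/\pi^{(t)}(y)$. Using the edge-symmetry $\pi^{(t)}(y,z)=\pi^{(t)}(z,y)$, the resulting sum collapses to $\E[\pi^{(t)}(S_t,z)]/\pi^{(0)}(x_0)$. Separately, applying the tower property to $\bP_{\{x_0\}}(z\in S_{t+1})$ via \eqref{rule} yields the very same expectation divided by $\pi^{(t+1)}(z)$, so multiplying by $\pi^{(t+1)}(z)/\pi^{(0)}(x_0)$ matches both sides. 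Morally, this is a time-dependent version of the Morris--Peres duality between the chain and its evolving sets, with the edge-symmetry of $\pi^{(t)}$ playing the role that detailed balance plays in the static case.

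The only subtlety I anticipate is the well-definedness of \eqref{rule}: this relies on the monotonicity $t\mapsto\pi^{(t)}(y)$ being non-decreasing, as emphasized in the parenthetical preceding \eqref{rule}. Without this the conditional inclusion probability could exceed one and both calculations above would break. Since $\{\bG_t\}$ is standing as an increasing sequence of subgraphs, this is automatic, and I expect no further obstacle; the whole proof should reduce to the two short calculations outlined above.
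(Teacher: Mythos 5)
Your proof is correct, and it is essentially the argument one would find in the cited source: the paper itself does not prove this lemma but defers entirely to \cite[Lemma 2.1]{DHMP}, so your two computations (the one-line conditional expectation for the martingale property, and the induction via Chapman--Kolmogorov matched against the tower property applied to \eqref{rule}) supply exactly the verification the paper omits. One small but real misstatement: the vertices $y$ are \emph{not} placed into $S_{t+1}$ independently --- the update \eqref{rule} uses a single shared uniform $U_{t+1}$ as a common threshold, so the inclusion events are strongly coupled. This does not damage either of your calculations, since both only use the marginal probabilities $\bP(y\in S_{t+1}\mid S_t)=\pi^{(t)}(S_t,y)/\pi^{(t+1)}(y)$ together with linearity of expectation, but you should not assert independence. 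Your observation that monotonicity of $t\mapsto\pi^{(t)}(y)$ is what keeps these ratios in $[0,1]$, and that the edge-symmetry $\pi^{(t)}(y,z)=\pi^{(t)}(z,y)$ plays the role of detailed balance in the duality, is exactly the right way to see why the time-dependent version goes through.
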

Using the martingale property of $\{\pi^{(t)}(S_t)\}$ in Lemma \ref{evol-set-lem}, one defines another set-valued process called {\emph{size-biased}} evolving set $\{\wh{S}_t\}$ with $\wh{S}_0=\{x_0\}$, having Markovian transition probabilities
\begin{align}\label{size-bias-ker}
\wh{K}(t,A;t+1,B):=\frac{\pi^{(t+1)}(B)}{\pi^{(t)}(A)}K(t,A;t+1,B), \quad   A\subseteq V_t, B\subseteq V_{t+1},\, t\in\N,
\end{align}
where $K(t,\cdot;t+1,\cdot)$ is the inhomogeneous transition probabilities of the (original) evolving sets $\{S_t\}$. They induce the multi-step transition probabilities
\begin{align*}
\wh{K}(s,A;t,B)=\frac{\pi^{(t)}(B)}{\pi^{(s)}(A)}K(s,A;t,B), \quad   A\subseteq V_s, B\subseteq V_t,\, 0\le s<t,
\end{align*}
We use $\wh{\bE}$ to denote expectation under the $\{\wh{S}_t\}$ law.

\medskip
\noindent
{\emph{Proof of Theorem \ref{thm-main}. }} 
By (\ref{eq:set-walk}) we first bound $\bP_{\{x_0\}}(y\in S_t)$. Henceforth fixing $t\ge 2$ and any $1\le s\le t-1$, define for every $u\in\N$,
\begin{align*}
A_u=\Big\{\sup_{0\le i\le u}\pi^{(i)}(S_i)\le v(s)/2\Big\}.
\end{align*}
By Doob's martingale inequality,
\begin{align*}
\bP_{\{x_0\}}(A_t^c)\le \frac{2\pi^{(0)}(x_0)}{v(s)}.
\end{align*}
Thus, for any $\alpha\in (0,1)$,
\begin{align}\label{eq:moment}
\bP_{\{x_0\}}(y\in S_t)&\le \bP_{\{x_0\}}(A_t^c)+\bP_{\{x_0\}}(\{y\in S_t\}\cap A_t)\nonumber\\
&\le \frac{2\pi^{(0)}(x_0)}{v(s)}+\frac{1}{\pi^{(t)}(y)^\alpha}\bE_{\{x_0\}}\left[\pi^{(t)}(S_t)^\alpha 1_{A_t}\right].
\end{align}
By the same derivation as in \cite[Lemma 2.2]{DHMP} of \cite[Eq. (2.13)]{DHMP}, for $S_u\neq\emptyset$,
\begin{align}\label{eq:recur}
\bE[\pi^{(u+1)}(S_{u+1})^\alpha\big| \cF_u]\le \pi^{(u)}(S_u)^\alpha\big[1-c_+R_u^2\big]
\end{align}
where
\begin{align*}
R_u=\frac{\pi^{(u)}(S_u,S_u^c)}{\pi^{(u)}(S_u)}, \quad c_+=\frac{2\alpha(1-\alpha)\gamma^2}{(1-\gamma)^2}, \quad \cF_u=\sigma\{S_0,S_1,...,S_u\}.
\end{align*}
While we do not repeat this part of derivations, the key steps consist of computing one-step evolution of $\pi^{(u)}(S_u)$ conditional on $\{U_{u+1}\le 1/2\}$ or $\{U_{u+1}>1/2\}$ resp., by the update rule (\ref{rule}). Then use conditional Jenson's inequality to bound the \abbr{LHS} of (\ref{eq:recur}).

Since $A_u\in\cF_u$, $A_{u+1}\subseteq A_u$, one has from (\ref{eq:recur})
\begin{align}
\bE[\pi^{(u+1)}(S_{u+1})^\alpha 1_{A_{u+1}}\big| \cF_u]&\le\bE[\pi^{(u+1)}(S_{u+1})^\alpha 1_{A_u}\big| \cF_u]\nonumber\\
&\le\pi^{(u)}(S_u)^\alpha1_{A_u}(1-c_+R_u^2).\label{sub-mg-est}
\end{align}
On $1_{A_u}$, $u\ge s$, one has $\pi^{(u)}(S_u)\le v(s)/2\le v(u)/2$, hence
by the definition (\ref{def:isop}) of the isoperimetric function, 
\begin{align*}
R_u\ge\phi_u(\pi^{(u)}(S_u)),
\end{align*}
the preceding inequality \eqref{sub-mg-est} then yields for $u\ge s$
\begin{align}
\bE[\pi^{(u+1)}(S_{u+1})^\alpha 1_{A_{u+1}}\big| \cF_u]\le \pi^{(u)}(S_u)^\alpha1_{A_u}
\big[1-c_+\phi_u^2(\pi^{(u)}(S_u))\big].\label{eq:indicator}
\end{align}
Recall that $\wh{\bE}$ denotes the expectation over the law of size-biased evolving sets. From (\ref{size-bias-ker}) we see that $S_t\neq\emptyset$ for all $t$ with probability one under $\wh{\bE}$, starting from $S_0$ non-empty.
Let for $u\ge s$
\begin{align*}
Z_u:=\pi^{(u)}(S_u)^{\alpha-1}1_{A_u}, \quad 
L_u:=\wh{\bE}_{\{x_0\}}(Z_u)=\frac{\bE_{\{x_0\}}\big[\pi^{(u)}(S_u)^\alpha 1_{A_u}\big]}{\pi^{(0)}(x_0)}.
\end{align*}
The inequality (\ref{eq:indicator}) together with \eqref{size-bias-ker} yield for $u\ge s$
\begin{align}
\wh{\bE}\left[Z_{u+1}|\cF_u\right]\le Z_u\Big[1-c_+\phi_u^2\Big(Z_u^{\frac{1}{\alpha-1}}\Big)\Big],\label{eq:recur-2}
\end{align}
noting that in case $1_{A_u}=0$, necessarily also $1_{A_{u+1}}=0$ and the inequality also holds.
Set for $u\ge s$ the non-decreasing (in $z$) and non-negative
\begin{align*}
f_u(z):=\frac{c_+}{2}\phi_u^2\Big(\Big(\frac{z}{2}\Big)^{\frac{1}{\alpha-1}}\Big), \quad  z\ge 0,
\end{align*}
since $r\mapsto\phi_u(r)$ is non-increasing.
Use \cite[Lemma 12]{MP} on the \abbr{RHS} of (\ref{eq:recur-2}) after expectation, per $u\ge s$, we get
\begin{align}\label{L-iter}
L_{u+1}\le L_u[1-f_u(L_u)].
\end{align}
In particular, $u\mapsto L_u$ is non-increasing, and bounding \eqref{L-iter} further using $1-x\le e^{-x}, \forall x\ge 0$, we get
\begin{align}\label{eq:solve}
\int_{L_{u+1}}^{L_u}\frac{dz}{zf(z)}\ge\frac{1}{f_u(L_u)}\log\frac{L_u}{L_{u+1}}\ge 1,  \quad u\ge s.
\end{align}
By Jenson's inequality and the martingale property, 
\begin{align}\label{eq:Ls}
L_s= \frac{\E_{\{x_0\}}[\pi^{(s)}(S_s)^\alpha]}{\pi^{(0)}(x_0)}\le \frac{\big(\E_{\{x_0\}}[\pi^{(s)}(S_s)]\big)^\alpha}{\pi^{(0)}(x_0)}=\pi^{(0)}(x_0)^{\alpha-1}\le 1.
\end{align} 
We iteratively solve for $L_u, u=s+1,...,t$ out of (\ref{eq:solve}) starting from (\ref{eq:Ls}), and after a change of variables $z'=z/2$ the formula agrees with (\ref{eq:iter}). Combined with (\ref{eq:moment}) and (\ref{eq:set-walk}) it yields
\begin{align*}
P(0,x_0;t,y)\le\frac{\pi^{(t)}(y)}{\pi^{(0)}(x_0)}\Big[\frac{2\pi^{(0)}(x_0)}{v(s)}+\frac{\pi^{(0)}(x_0)L_t}{\pi^{(t)}(y)^\alpha}\Big]=\frac{2\pi^{(t)}(y)}{v(s)}+\pi^{(t)}(y)^{1-\alpha}L_t.
\end{align*}
Since we are free to choose $s\in[1,t-1]$ in the beginning, we get the first bound (\ref{first-bd}). 

Turning to show the second bound (\ref{second-bd}), note that $\phi_t(r)\ge\Phi_t$ for all $t\in\N$ and any $r\ge0$, hence (\ref{eq:iter}) can be further simplified to
\begin{align*}
L_{u+1}:=\argmax\Big\{\ell: \, \frac{1}{c_+\Phi^2_u}\log\frac{L_u}{\ell}\ge 1\Big\}, \quad  s\le u\le t-1.
\end{align*}
That is, upon iteration,
\begin{align*}
L_t\le L_se^{-c_+\sum_{u=s}^{t-1}\Phi_u^2}\le 
e^{-c_+\sum_{u=s}^{t-1}\Phi_u^2}.
\end{align*}
Taking $s=\lfloor t/2\rfloor$, $\alpha=1/2$, we get (\ref{second-bd}) upon noting $\pi^{(t)}(y)\in[1,\Delta]$.
\qed

\medskip
\noindent
{\emph{Proof of Proposition \ref{ppn-main}. }} Combining \eqref{second-bd}, (\ref{eq:regularity}) and the doubling property of $t\mapsto v(t)$, we have for some $C_1=C_1(\gamma,\Delta,D,\zeta)$ finite and all $t\ge 2$, $y\in V_t$,
\begin{align}\label{eq:diag-ubd}
P(0,x_0;t,y)\le \frac{C_1}{v(t)}.
\end{align}
In view of \eqref{cond:fluct}, \eqref{ball-like} and (\ref{eq:diag-ubd}), there exist some $\delta_0=\delta_0(C_1)\in(0,\frac{1}{2}]$ and $t_0=t_0(\delta_0)$ finite, such that for all $t\ge t_0$,
\begin{align*}
\sum_{y\in \G_t\backslash\B(x_0,(1-\delta_0)r(t))}P(0,x_0;t,y)\le 1/2.
\end{align*}
Consequently, for such $\delta_0$ and all $t\ge t_0$,
\begin{align*}
\bP_{x_0}(X_t\in\B(x_0,(1-\delta_0)r(t)))
=\sum_{y\in \B(x_0,(1-\delta)r(t))}P(0,x_0;t,y)\ge 1/2.
\end{align*}
By (\ref{ball-like}), (\ref{eq:dhk}), there exists $c_2=c_2(\delta_0,c_{\text{HK}})>0$ such that for all $t\ge t_0$, $y\in\B(x_0,(1-\delta_0)r(t))$,
\begin{align*}
\bP_{x_0}&(X_{t+\psi(r(t))}=y)\\
&\ge \bP\left(X_{t+\psi(r(t))}=y|X_t\in\B(x_0,(1-\delta_0)r(t))\right)\cdot
\bP_{x_0}(X_t\in\B(x_0,(1-\delta_0)r(t)))\\
&\ge\inf_{x\in\B(x_0,(1-\delta_0)r(t))}\left\{p_{\psi(r(t))}^{r(t)}(x,y)\right\}\cdot\frac{1}{2}\\
&\ge\frac{c_2}{v(\B(x_0,r(t)))}\ge \frac{c_2}{v(t)}.
\end{align*}
Applying a change of variables $t'=t+\psi(r(t))$, we get \eqref{eq:lbd} since $v(t')\ge v(t)$. Adjusting $c_2$ if necessary, the bound applies to all $t\ge 2$.
\qed

\section{Proof of Proposition \ref{ppn:rec}}\label{sec:3}
During each interval $I_l:=[t_l,t_{l+1})$, $\G_t=\G_{t_l}$ is a fixed finite graph. We first recall an on-diagonal lower bound that holds for all time.

\begin{lem}\label{lem:dlbd}
Let $\bH$ be a connected finite graph, and $\{Y_t\}$ a discrete time \abbr{SRW}. Then, for all $t\in\N$ and $x\in \bH$ we have that
\begin{align*}
\P_x(Y_{2t}=x)\ge\frac{\pi(x)}{\pi(\bH)}.
\end{align*}
\end{lem}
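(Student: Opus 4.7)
This is a standard on-diagonal lower bound for reversible finite Markov chains; I would prove it by combining reversibility of \abbr{SRW} with a one-line Cauchy--Schwarz estimate. Let $P$ denote the \abbr{SRW} transition kernel on $\bH$, so that $P$ is reversible with respect to $\pi$: $\pi(x)P(x,y)=\pi(y)P(y,x)$. The same identity then holds with $P$ replaced by $P^t$ for every $t\in\N$, so $P^t(y,x)=\frac{\pi(x)}{\pi(y)}P^t(x,y)$.

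First, expand $P^{2t}(x,x)$ via Chapman--Kolmogorov and apply the reversibility identity:
\begin{align*}
P^{2t}(x,x)=\sum_{y\in \bH}P^t(x,y)P^t(y,x)=\pi(x)\sum_{y\in \bH}\frac{P^t(x,y)^2}{\pi(y)}.
\end{align*}
Second, estimate the remaining sum from below by Cauchy--Schwarz applied to the factorization $P^t(x,y)=\frac{P^t(x,y)}{\sqrt{\pi(y)}}\cdot\sqrt{\pi(y)}$:
\begin{align*}
1=\left(\sum_{y\in \bH}P^t(x,y)\right)^2\le\left(\sum_{y\in \bH}\frac{P^t(x,y)^2}{\pi(y)}\right)\left(\sum_{y\in \bH}\pi(y)\right)=\pi(\bH)\sum_{y\in \bH}\frac{P^t(x,y)^2}{\pi(y)}.
\end{align*}
Combining the two displays yields $P^{2t}(x,x)\ge \pi(x)/\pi(\bH)$, as desired.

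There is no real obstacle: the only subtlety is the use of Cauchy--Schwarz with the weighting $\sqrt{\pi(y)}$, which is the standard trick for producing a factor of $\pi(\bH)^{-1}$ (and is essentially equivalent to the spectral-decomposition proof, where one writes $P^{2t}(x,x)/\pi(x)=\sum_i\lambda_i^{2t}\psi_i(x)^2\ge 1$ using the constant eigenfunction). I would present the Cauchy--Schwarz version since it avoids invoking the spectral theorem and makes clear that only reversibility and finiteness of $\bH$ are used; in particular no connectedness or aperiodicity assumption is needed for this inequality.
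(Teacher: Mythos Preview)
Your proof is correct and follows essentially the same route as the paper: Chapman--Kolmogorov plus reversibility to write $P^{2t}(x,x)/\pi(x)$ as a weighted sum of squares, then Cauchy--Schwarz to extract the factor $\pi(\bH)^{-1}$. The only cosmetic difference is that the paper writes the intermediate sum as $\sum_z\big(\P_x(Y_t=z)/\pi(z)\big)^2\pi(z)$ rather than $\sum_y P^t(x,y)^2/\pi(y)$, which is of course the same expression.
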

\begin{proof}
Using the same notation as \eqref{def:degree}-\eqref{def:pi} for $\pi$ replacing $\pi^{(t)}$, we have by reversibility
\begin{align*}
\frac{\P_x(Y_{2t}=x)}{\pi(x)}&=\sum_z\frac{\P_x(Y_t=z)}{\pi(z)}\frac{\P_z(Y_t=x)}{\pi(x)}\pi(z)=\sum_z\Big(\frac{\P_x(Y_t=z)}{\pi(z)}\Big)^2\pi(z)\\
&\ge \frac{\left[\sum_z\P_x(Y_t=z))\right]^2}{\pi(\bH)}=\frac{1}{\pi(\bH)}.
\end{align*}
\end{proof}
\noindent
{\emph{Proof of Proposition \ref{ppn:rec}. }}
First we make the (spurious) assumption that 
\begin{align}\label{spurious}
t_{l+1}-t_l\ge \frac{4Mv(\K_l)}{(\log v(\K_l))^{1+\ep/2}}
\end{align}
for some finite $M$, the $\ep>0$ of (\ref{eq:exit-tail}) and all $l\in\N$.

Henceforth fix $l\in\N$. By Lemma \ref{lem:dlbd}, for all $x\in V_{t_l}$, $t\in[t_l,t_{l+1})$,
\begin{align*}
\bP(X_t=x|X_{t_l}=x)\ge\frac{\pi(x)}{v(t_l)\Delta}.
\end{align*}
Then, setting $n=n(l):=\frac{t_{l+1}-t_l}{4}$,
\begin{align}\label{eq:n-2n}
\bE\left(|\{s\in[t_l+n,t_l+2n):\, X_s=x\}|\big|X_{t_l}=x\right)\ge
\frac{n\pi(x)}{v(t_l)\Delta}.
\end{align}
By (\ref{eq:growth})-(\ref{eq:exit-tail}) and our choice of $n$, uniformly for all $x\in \K^{l-1}\subseteq \K_l$,  
\begin{align}\label{eq:tau}
\bP(\tau_{\K_l}\ge n|X_{t_l}=x)  \text{  decays super-polynomially in }l.
\end{align}
Hence, for $x\in\K^{l-1}$,
\begin{align}\label{eq:0-n}
&\bE\left(|\{s\in [t_l, t_l+\tau_{\K_l}):\, X_s=x\}|\big|X_{t_l}=x\right)\nonumber\\
&\le\bE\left(|\{s\in [t_l, t_l+n):\, X_s=x\}|\big|X_{t_l}=x\right)+\bE(\tau_{\K_l}1_{\{\tau_{\K_l}\ge n\}}|X_{t_l}=x),
\end{align}
where further $\bE(\tau_{\K_l}1_{\{\tau_{\K_l}\ge n\}}|X_{t_l}=x)\le s(l)$ uniformly in $x$ for some summable $s(l)$ due to the tail bound (\ref{eq:tau}).
Combining (\ref{eq:n-2n}) and (\ref{eq:0-n}), 
\begin{align}\label{eq:0-to-2n}
&\bE\left(|\{s\in [t_l+\tau_{\K_l}, t_l+\tau_{\K_l}+2n]:\, X_s=x\}|\big|X_{t_l}=x\right)\nonumber\\
&\ge\bE\left(|\{s\in [t_l, t_l+2n]:\, X_s=x\}|\big|X_{t_l}=x\right)-\bE\left(|\{s\in [t_l, t_l+\tau_{\K_l}]:\, X_s=x\}|\big|X_{t_l}=x\right)\nonumber\\
&\ge\bE\left(|\{s\in[t_l+n,t_l+2n):\, X_s=x\}|\big|X_{t_l}=x\right)-s(l)\ge\frac{n\pi(x)}{v(t_l)\Delta}-s(l).
\end{align}
By (\ref{eq:exit-dist}), the first hitting distribution of $\partial\K_l$ from $x_0,x\in\K^{l-1}$ have uniformly bounded Radon-Nikodyn density. Employing strong Markov property at $\tau_{\K_l}$, this extends to any measurable function of the hitting distribution. Hence, combining \eqref{eq:exit-dist}) and \eqref{eq:0-to-2n} we have that
\begin{align}\label{change-pt}
&\bE\left(|\{s\in [t_l+\tau_{\K_l}, t_l+\tau_{\K_l}+2n]:\, X_s=x\}|\big|X_{t_l}=x_0\right)\nonumber\\
&\ge c_{RN}\bE\left(|\{s\in [t_l+\tau_{\K_l}, t_l+\tau_{\K_l}+2n]:\, X_s=x\}|\big|X_{t_l}=x\right)\ge c_{RN}\left(\frac{n\pi(x)}{v(t_l)\Delta}-s(l)\right).
\end{align}
Employing again \eqref{eq:tau}, we deduce from \eqref{change-pt}
\begin{align}
&\E\left(|\{s\in[t_l,t_l+4n):\, X_s=x\}|\big|X_{t_l}=x_0\right)\nonumber\\
&\ge\E\left(|\{s\in[t_l,t_l+\tau_{\K_l}+2n):\, X_s=x\}|\big|X_{t_l}=x_0\right)-\E(2\tau_{\K_l}1_{\{\tau_{\K_l}\ge 2n\}}|X_{t_l}=x_0)\nonumber\\
&\ge c_{RN}\left(\frac{n\pi(x)}{v(t_l)\Delta}-s(l)\right)-s'(l),\label{lbd:x0}
\end{align}
where $s'(l)$ is another summable term in $l$.

Recall that $t_{l+1}-t_l= 4n$, and $\G_t=\G_{t_l}$ during $I_l$. We use reversibility to deduce from \eqref{lbd:x0} that
\begin{align}\label{eq:loc-time}
&\bE\left(|\{s\in [t_l, t_{l+1}):\, X_s=x_0\}|\big|X_{t_l}=x\right)\nonumber\\
&=\frac{\pi(x_0)}{\pi(x)}\bE\left(|\{s\in [t_l, t_{l+1}):\, X_s=x\}|\big|X_{t_l}=x_0\}\right)\nonumber\\
&\ge \frac{c_{RN}}{\Delta}\left(\frac{(t_{l+1}-t_l)\pi(x)}{4v(t_l)\Delta}-s(l)\right)-\frac{s'(l)}{\Delta}.
\end{align}
Summing over $l\in\N$ establishes the recurrence of $\{X_t\}$ as soon as (\ref{eq:div}) holds, subject to \eqref{spurious}.

We now remove the spurious assumption \eqref{spurious}. By (\ref{eq:growth}),
\begin{align*}
\sum_{l=0}^\infty\frac{v(\K_l)/(\log v(\K_l))^{1+\ep/2}}{v(t_l)}
\le\sum_{l=0}^\infty(\log v(\K_l))^{-1-\ep/2}<\infty,
\end{align*}
hence if (\ref{eq:div}) holds, there must exist a subsequence $\{l_k\}\subseteq\N$ such that 
\begin{align*}
\forall k,\quad t_{l_k+1}-t_{l_k}\ge \frac{4Mv(\K_{l_k})}{(\log v(\K_{l_k}))^{1+\ep/2}} \quad \&\quad \sum_{k=0}^\infty\frac{t_{l_k+1}-t_{l_k}}{v(t_{l_k})}=\infty.
\end{align*}
Consequently, we merely restrict the whole analysis to the intervals $\{I_{l_k}: k\in\N\}$.
\qed

\section{Proof of Proposition \ref{ppn:no-merging}}\label{sec:4}
For each $t\in\N$ assigning $\Pi_t:=\{\pi^{(t)}(x,y);\, |x-y|\le1\}$ on the edges of $V_N=\{0,...,N\}$, induces a reversible pair of transition kernel $K^{(t)}(x,y):=\pi^{(t)}(x,y)/\pi^{(t)}(x)$ and probability measure $\mu^{(t)}(x):=\pi^{(t)}(x)/\sum_{y\in V_N}\pi^{(t)}(y)$. We mention in passing that the notation $K_{0,t}(x,y)$ used in \cite{SZ1,SZ} and adopted in Proposition \ref{ppn:no-merging} can also be written as 
\begin{align*}
K_{0,t}(x,y)=\P(X_t=y|X_0=x)=P(0,x;t,y).
\end{align*}

The following (counter-)example can be first described in words: the time-dependent $\Pi_t$ we assign to $V_N$ produces drift that points towards $0$ on the space interval $[0,N/2]$, and points towards $N$ on the space interval $[N/2, N]$. Hence, two Markov chains starting from vertices $0$ and $N$ respectively cannot couple in order $N^2$ time.

\medskip
\noindent
{\emph{Proof of Prop. \ref{ppn:no-merging}. }}
Without loss of generality assume $N$ is even.
For any $\theta,\eta>0$, assign the time-dependent $\Pi_t$ to $V_N$ as
\begin{align*}
\text{For }&1\le x\le N/2,\\
&\pi^{(t)}(x-1,x)=1+\theta, \, \pi^{(t)}(x,x)=1-\eta, \, \text {when }x+t\text{ is even};\\
&\pi^{(t)}(x-1,x)=1-\theta, \, \pi^{(t)}(x,x)=1, \, \text {when }x+t\text{ is odd}.\\
\text{For }&N-1\ge  x\ge N/2,\\
&\pi^{(t)}(x,x+1)=1+\theta, \, \pi^{(t)}(x,x)=1-\eta, \, \text {when }x+t\text{ is even};\\
&\pi^{(t)}(x,x+1)=1-\theta, \, \pi^{(t)}(x,x)=1, \, \text {when }x+t\text{ is odd};\\
\text{For }&x=0 \text{ or }N,\, 
\pi^{(t)}(x,x)=2.
\end{align*}
It is easy to check that the requirements on $(K^{(t)},\pi^{(t)})$ are met, upon taking $\theta,\eta$ small enough relative to the given $\ep>0$.
Consider a Markov chain $\{X_t\}$ starting at vertex $0$ at time $0$. We define an auxiliary, two-state homogeneous Markov chain, $\{Z_t\}$, as
\begin{align*}
Z_t&=A, \quad \text{if }\pi^{(t)}(X_t,X_t+1)=1-\theta,\\
Z_t&=B, \quad \text{if }\pi^{(t)}(X_t,X_t+1)=1+\theta.
\end{align*}
By construction, a lazy step for $\{X_t\}$ is equivalent to a change of state for $\{Z_t\}$, with the latter having the following transition matrix as long as $X_t$ stays in the interval $[1,N/2)$
\begin{align*}
\bordermatrix{~ & A & B \cr
                  A & \frac{2}{3-\eta} & \frac{1-\eta}{3-\eta} \cr
                  B & \frac{1}{3} & \frac{2}{3} \cr}.
\end{align*}
It converges exponentially fast to its invariant distribution 
\begin{align}\label{Z:inv}
[u(A),u(B)]:=\left[\frac{3-\eta}{6-4\eta},\frac{3-3\eta}{6-4\eta}\right]. 
\end{align}
Furthermore, when $Z_t=A$, $X_t$ has drift $\Delta(A)=-2\theta/(3-\eta)$ towards $0$ for its next transition; when $Z_t=B$, $X_t$ has drift $\Delta(B)=2\theta/3$ towards $N/2$ for its next transition, with
\begin{align}\label{drift}
\beta=\beta(\theta,\eta):=u(A)\Delta(A)+u(B)\Delta(B)<0.
\end{align}
Define $\sigma_0=\sigma^N_0:=\inf\{t>0:\, X_t=0\}$, then for some $c_1=c_1(\beta)$ positive and all $N$,
\begin{align}\label{eq:excur}
\P(\sigma_0\ge N/2|X_0=0)\le c_1^{-1}e^{-c_1N}.
\end{align}
Indeed, we construct $\{X_t\}$ and some other symmeric lazy random walk $\{Y_t\}$ on $\Z$ with $Y_0=0$ on the same probability space $(\Omega, \cF,\wt{\P})$, as follows. At each step $t\in\N$, take $U^{(t)}$ an independent Uniform $[0,1]$ coin. On the event $Z_t=A$, let $X$ and $Y$ both stay put if the outcome $U^{(t)}\le(1-\eta)/(3-\eta)$, or else both move as symmetric \abbr{SRW}s if $U^{(t)}\ge 1-(2-2\theta)/(3-\eta)$, or else let $X$ move left and $Y$ move as symmetric \abbr{SRW}. Similarly, on the event $Z_t=B$, let $X$ and $Y$ both stay put if $U^{(t)}\le 1/3$, or else both move as symmetric \abbr{SRW}s if $U^{(t)}\ge (2-2\theta)/3$, or else let $X$ move right and $Y$ move as symmetric \abbr{SRW}. 

Though its laziness depends on $Z_t$, the process $\{Y_t\}$ is balanced and clearly diffusive. Furthermore, we have shown that $\{Z_t\}$ is ergodic with explicit invariant measure \eqref{Z:inv} and exponentially fast convergence, implying that under the coupling
\begin{align*}
\forall t, \quad \wt{\P}(E_t|X_0=Y_0=0)\ge 1- c^{-1}e^{-ct}, \quad \text{for }E_t:=\{X_t-Y_t\le \beta t/2\}.
\end{align*}
To confirm \eqref{eq:excur}, 
\begin{align*}
&\wt{\P}(\sigma_0\ge N/2|X_0=0)=\wt{\P}(X_t\in[1,N/2], \, \forall t\in[1,N/2]|X_0=0)\\
&\le \wt{\P}(\{X_{N/2}\in[1,N/2]\}\cap E_{N/2}|X_0=Y_0=0)+\wt{\P}(E_{N/2}^c|X_0=Y_0=0)\\
&\le \wt{\P}(Y_{N/2}\ge 1-\beta N/2|X_0=Y_0=0)+\wt{\P}(E_{N/2}^c|X_0=Y_0=0)\\
&\le c_1^{-1}e^{-c_1N},
\end{align*}
the last inequality being due to the diffusivity of $Y$.
The same deviation bound as (\ref{eq:excur}) holds also for $\sigma_k-\sigma_{k-1}$ replacing $\sigma_0$, where $\sigma_k=\sigma^N_k:=\inf\{t>\sigma_{k-1}: \, X_t=0\}$, $k\ge1$. 
By the strong Markov property, the mutual independence of the increments $\left\{(X_t)_{t\in[\sigma_{k-1}, \sigma_k)}\right\}_{k\ge1}$ implies that with $\alpha:=c_1/2$ and $T_N:=e^{\alpha N}$, 
\begin{align}
&\P(X_t< N/2, \, \forall t\in[0,T_N]|X_0=0)\ge \P\left(\cap_{k=1}^{T_N}\{\sigma_k-\sigma_{k-1}< N/2\}|X_0=0\right)\nonumber\\
&\ge \left(1-c_1^{-1}e^{-c_1 N}\right)^{T_N}\ge 1-c_1^{-1}T_Ne^{-c_1N}
=1-c_1^{-1}e^{-\alpha N}.
\label{eq:ld-x}
\end{align}
Similarly, one then shows that if $\{X_t\}$ starts from vertex $N$ at time $0$, also
\begin{align}\label{eq:ld-y}
\P(X_t>N/2, \, \forall t\in[0,T_N]|X_0=N)\ge 1-c_1^{-1}e^{-c_1N}.
\end{align}
To conclude, just note that $|K_{0,T_N}(0,A)-K_{0,T_N}(N,A)|\ge 1/2$ for $A:=[0,N/2]$ by \eqref{eq:ld-x}-\eqref{eq:ld-y}, hence $T_{TV}(1/2)\ge T_N$. Also, by (\ref{eq:ld-x}) there exists some $z\in[0,N/2]$ with $K_{0,T_N}(0,z)\ge 1/N$, whereas by \eqref{eq:ld-y} $K_{0,T_N}(N,z)\le c_2^{-1}e^{-c_2N}$, hence $T_\infty(1/2)\ge T_N$ as well.
\qed

\bigskip
\noindent
{\bf{Acknowledgments.}} I thank my advisor A. Dembo for many inspiring discussions (particularly on the role of mixing) and very helpful comments. I also thank Prof. T. Kumagai for introducing me to heat kernel estimates in 2015, and further valuable comments.

\end{document}